\newcommand{\cL}{\mathcal{L}}
\newcommand{\cO}{\mathcal{O}}
\newcommand{\kbar}{\overline{k}}
\DeclareMathOperator{\Aut}{Aut}
\DeclareMathOperator{\End}{End}
\DeclareMathOperator{\Frob}{Frob}
\DeclareMathOperator{\Gal}{Gal}
\DeclareMathOperator{\Hom}{Hom}
\DeclareMathOperator{\Spec}{Spec}
\DeclareMathOperator{\Twist}{Twist}
\DeclareMathOperator{\ord}{ord}
\theoremstyle{plain}
\newtheorem{theorem}{Theorem}[section]
\newtheorem{lemma}[theorem]{Lemma}
\newtheorem{proposition}[theorem]{Proposition}
\newtheorem{corollary}[theorem]{Corollary}
\newtheorem{definition}[theorem]{Definition}
\newtheorem{remark}[theorem]{Remark}
\theoremstyle{definition} 
\newtheorem{example}[theorem]{Example}
\theoremstyle{remark} 
\DeclareFontFamily{U}{wncy}{}
\DeclareFontShape{U}{wncy}{m}{n}{<->wncyr10}{}
\DeclareSymbolFont{mcy}{U}{wncy}{m}{n}
\DeclareMathSymbol{\Sha}{\mathord}{mcy}{"58}
\newcommand{\lp}{\left(}
\newcommand{\rp}{\right)}
\newcommand{\lbrb}[1]{\lp #1 \rp}
\newcommand{\lcrc}[1]{\{ #1 \}}
\title{Counting the number of the twists of certain polarized abelian varieties}
\author{Wontae Hwang and Keunyoung Jeong}
\address{School of Mathematics, Korea Institute for Advanced Study, 85 Hoegiro, Dongdaemun-gu, Seoul 02455, South Korea}
\email{hwangwon@kias.re.kr}
\address{Department of Mathematical Sciences, Ulsan National Institute of Science and Technology, UNIST-gil 50, Ulsan 44919, South Korea}
\email{kyjeongg@gmail.com}
\begin{document}

\subjclass[2010]{Primary 11G10, 14K02, 12G05}

\keywords{Twists, Polarized abelian varieties, Galois cohomology}

\maketitle

\begin{abstract}
We count the number of isomorphism classes of degree $d$-twists of some polarized abelian varieties over finite fields of odd prime dimension. This can be seen as a higher dimensional analogue of the counting problem for elliptic curves case.
\end{abstract}

\section{Introduction}\label{intro}
Let $k$ be a perfect field and let $K$ be a Galois extension of $k.$ For a quasi-projective algebraic variety $X$ over $k$, an algebraic variety $X^{\prime}$ over $k$ which admits a $K$-isomorphism $\phi : X^{\prime}_K \rightarrow X_K$ is called a \emph{$(K/k)$-twist} of $X.$ 
We will denote the set of $k$-isomorphism classes of $(K/k)$-twists of $X$ by $\Twist(K/k, X)$. 
Then it is well known \cite[III. \S 1, Proposition 5]{Ser2} that there is a bijection between the set $\Twist(K/k, X)$ and the first Galois cohomology group $H^1(\Gal(K/k), \Aut_{K}(X_K))$.
In this paper, we consider the case when $X$ is an abelian variety over $k$.
As we mentioned above, there is a bijection between $\Twist(K/k, X)$ and $H^1(\Gal(K/k), \Aut_K(X_K))$. 
In general, $\Aut_K(X_K)$ is infinite, which makes it difficult to compute the group $H^1(\Gal(K/k), \Aut_K(X_K))$.
To remedy this difficulty, we introduce a polarization $\lambda$ on $X$ over $k$ to consider a polarized abelian variety $(X,\lambda)$ so that $\Aut_K(X_K, \lambda_K)$ is finite.
Then, analogously, we have the following bijection:
\begin{equation} \label{main eqn}
\theta \colon \Twist(K/k, (X, \lambda)) \to H^1(\Gal(K/k), \Aut_{K}(X_{K}, \lambda_{K})).
\end{equation}
The definitions of twists and automorphism groups of polarized abelian varieties are provided in Section \ref{twists of polar}.
Using (\ref{main eqn}), we provide an answer to the problem of counting the number of twists of certain polarized abelian varieties over finite fields, together with a work of the first author \cite{Hwa2}.  
In fact, we give a finer result by taking the degree of the twists into account.

To this aim, we 
introduce the notion of the degree of twists (see Definition \ref{degdef}): we say that a $(\overline{k}/k)$-twist $(X^{\prime},\lambda^{\prime})$ of $(X, \lambda)$ is of degree $n$ if the ``minimal'' defining field $L$ of the corresponding $\overline{k}$-isomorphism $\phi : (X^{\prime}_{\overline{k}}, \lambda^{\prime}_{\overline{k}}) \rightarrow (X_{\overline{k}}, \lambda_{\overline{k}})$ is of degree $n$ over $k.$
In the case of elliptic curves, the number of $(\overline{k}/k)$-twists of $(E,O_E)$ of degree $d$, up to $k$-isomorphism, can be described explicitly. (For a more precise statement, see Proposition \ref{ECtwists} below.) 
The main theorem of this paper is a higher dimensional analogue of the previously mentioned result.

\begin{theorem} \label{main theorem}
Let $(X,\lambda)$ be an absolutely simple polarized abelian variety of odd prime dimension $g$ over a finite field $k$ with characteristic $\geq 5$, and let $s$ be an integer such that an action of the Frobenius automorphism $\Frob_k$ on $\Aut_{\kbar}(X_{\kbar}, \lambda_{\kbar})$ is induced by a multiplication by $s$. 
Also, let $N(d)$ be the number of $(\kbar/k)$-twists of $(X, \lambda)$ of degree $d$, $N=\sum_{d\geq 1}N(d)$, and $n = |\Aut_{\kbar}(X_{\kbar}, \lambda_{\kbar})|$.
Then we have
\begin{equation*}
N(d) = 
\prod_{\substack{p \mid d }} \lbrb{
p^{\min(t_{1,p}, t_{3,p}) + \min(t_{2,p}, t_{3,p}) - \min(t_{1,p} + t_{2,p}, t_{3,p})} - p^{\min(t_{1,p}-1, t_{3,p}) + \min(t_{2,p}, t_{3,p}) - \min(t_{1,p} + t_{2,p} - 1, t_{3,p})}},
\end{equation*}
for $d \mid n$, where $t_{1,p}=\ord_p(s-1), t_{2,p}=\ord_{p}(s^{d-1}+\cdots+1),$ and $t_{3,p}=\ord_{p} n$,
and $N(d) = 0$ otherwise.
\end{theorem}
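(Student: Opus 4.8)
The plan is to transport the counting problem to Galois cohomology via the bijection $\theta$ of \eqref{main eqn} and then to carry out an explicit prime-by-prime computation. Write $G = \Gal(\kbar/k) = \overline{\lara{\Frob_k}} \cong \hat{\bZ}$ and $A = \Aut_{\kbar}(X_{\kbar},\lambda_{\kbar})$. The hypotheses together with \cite{Hwa2} give that $A$ is cyclic of order $n$, on which $\Frob_k$ acts by multiplication by the unit $s \in (\bZ/n\bZ)^{\times}$; thus $\Twist(\kbar/k,(X,\lambda)) \cong H^1(\hat{\bZ}, \bZ/n\bZ)$ with this action. First I would record the basic cohomology: since $A$ is finite and $G$ is procyclic with topological generator $\Frob_k$, one has $H^1(\hat{\bZ}, A) = A/(s-1)A = \coker(s-1 \colon \bZ/n\bZ \to \bZ/n\bZ)$, of order $\gcd(s-1,n)$; in particular $N = \gcd(s-1,n)$, a first sanity check.

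Next I would pin down the degree of a class cohomologically. For the unique degree-$d$ extension $L_d/k$ one has $\Gal(\kbar/L_d) = \overline{\lara{\Frob_k^d}}$, and a twist is defined over $L_d$ precisely when the restriction of its class to this subgroup vanishes. A continuous cocycle is determined by $a = \xi(\Frob_k)$, and the cocycle relation gives $\xi(\Frob_k^d) = N_d(s)\,a$ with $N_d(s) = 1 + s + \cdots + s^{d-1}$. Since $H^1(\overline{\lara{\Frob_k^d}}, A) = A/(s^d-1)A$, the class $[a]$ trivialises over $L_d$ if and only if $N_d(s)\,a \in (s^d-1)A$; using $(s-1)N_d(s) = s^d-1$ one checks this condition is independent of the chosen representative, so $\deg[a] = \min\{\,d \ge 1 : N_d(s)\,a \in (s^d-1)A\,\}$ is well defined and $M(d) := \#\{[a] : \deg[a] \mid d\}$ equals $\lvert\ker(\res_{L_d})\rvert$.

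The heart of the argument is a prime-by-prime analysis. Decomposing $A \cong \prod_{p\mid n} \bZ/p^{t_{3,p}}\bZ$ by the Chinese remainder theorem, the $p$-component of $\coker(s-1)$ vanishes unless $p \mid s-1$; hence only primes $p \mid \gcd(s-1,n)$ contribute, and for these $H_p := (\bZ/p^{t_{3,p}}\bZ)/(s-1) \cong \bZ/p^{\min(t_{1,p},t_{3,p})}\bZ$ (here I would normalise $t_{1,p}$ to be capped at $t_{3,p}$, which is harmless since $s$ is only defined modulo $n$). For such $p$ the lifting-the-exponent lemma (for odd $p$ dividing $s-1$) gives $\ord_p(s^d-1) = t_{1,p} + \ord_p(d)$, whence $t_{2,p} = \ord_p N_d(s) = \ord_p(d)$. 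Consequently the trivialisation condition at $p$ depends on $d$ only through $\ord_p(d)$: a class of $p$-adic valuation $v$ in $H_p$ trivialises over $L_d$ iff $\ord_p(d) \ge t_{3,p} - v$. Therefore the local degree at $p$ is the prime power $p^{\max(0,\,t_{3,p}-v)}$, the global degree is the product (equivalently the least common multiple) of these coprime local degrees, and $N(d)$ factors as $\prod_{p\mid d} \#\{[a_p] \in H_p : \delta_p([a_p]) = p^{\ord_p(d)}\}$.

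Finally I would carry out the count. Solving $t_{3,p} - v = \ord_p(d) = t_{2,p}$ forces $v = t_{3,p}-t_{2,p}$, and the number of elements of exact valuation $v$ in $\bZ/p^{\min(t_{1,p},t_{3,p})}\bZ$ is $p^{e} - p^{e-1}$ with $e = \min(t_{1,p},t_{3,p}) - (t_{3,p}-t_{2,p})$; a short case check identifies $e$ with $\min(t_{1,p},t_{3,p}) + \min(t_{2,p},t_{3,p}) - \min(t_{1,p}+t_{2,p},t_{3,p})$ and $e-1$ with the same expression having $t_{1,p}$ replaced by $t_{1,p}-1$, which yields the displayed product. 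Out-of-range values of $v$ (no such element) correspond precisely to vanishing factors, while $d \nmid n$, or a prime $p \mid d$ with $p \nmid s-1$, forces $N(d)=0$. I expect the main obstacle to be exactly this last bookkeeping together with the valuation input feeding it: making the reduction ``global degree $=$ lcm of prime-power local degrees'' rigorous, and handling the lifting-the-exponent computation of $\ord_p(s^d-1)$ at the prime $p=2$ and the normalisation $t_{1,p}\le t_{3,p}$, since these are the points where the clean formula could otherwise fail.
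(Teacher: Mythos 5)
Your first two steps (transporting the count to $H^1(\hat{\bZ},A)$ with $A\cong\bZ/n\bZ$ via Theorem \ref{Intromain1}, and characterizing the degree of a class through the kernel of restriction to the degree-$d$ subfield) coincide with the paper's. After that your route genuinely diverges: the paper computes the size of that kernel as $c(d,s,n)=\gcd(s-1,n)\gcd(s^{d-1}+\cdots+1,n)/\gcd(s^d-1,n)$ via a kernel--cokernel exact sequence, proves multiplicativity of $c(\cdot,s,n)$ in $d$ by an elementary binomial expansion (no lifting-the-exponent anywhere), and finishes with M\"obius inversion; you instead split $A$ by CRT and compute the exact degree of each individual class as a product of local degrees $p^{\max(0,t_{3,p}-v)}$, then count classes of prescribed valuation. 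At odd primes $p$ your count $p^{e}-p^{e-1}$ with $e=\min(t_{1,p},t_{3,p})+t_{2,p}-t_{3,p}$ is correct (given your capping $t_{1,p}\le t_{3,p}$, which is genuinely needed) and matches the displayed factors, so your method is a legitimate alternative to M\"obius inversion there.

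The gap is the $p=2$ case you postpone, and it cannot be filled, because that is exactly where the stated formula is wrong; since both your argument and the paper's use nothing about $(n,s)$ beyond $\gcd(s,n)=1$ once cyclicity is known, no treatment of $p=2$ can complete a proof of the statement as written. Your criterion needs $t_{2,p}=\ord_p(d)$, but for $s\equiv 3 \pmod 4$ one has $t_{2,2}=\ord_2(s+1)+\ord_2(d)-1$ with $\ord_2(s+1)\ge 2$, so classes die over smaller fields than $2^{t_{3,2}-v}$ predicts. Concretely, take $n=4$, $s=3$, so $A=\bZ/4\bZ$ and $H^1=A/2A\cong\bZ/2\bZ$: the nontrivial class restricts to zero already over the quadratic extension because $1+s\equiv 0\pmod 4$, so $N(1)=N(2)=1$ and $N(4)=0$; yet with $t_1=1$, $t_2=\ord_2(40)=3$, $t_3=2$ the theorem's factor for $d=4$ is $2^{\min(1,2)+\min(3,2)-\min(4,2)}-2^{\min(0,2)+\min(3,2)-\min(3,2)}=2-1=1$, and the predicted total $1+1+1=3$ even exceeds $N=\gcd(s-1,n)=2$. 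The paper's own proof fails at the same spot: its last display rewrites $c(d_p/p,s,n_p)$ by replacing $t_{1,p}$ with $t_{1,p}-1$, but $t_{1,p}$ does not depend on $d$ --- it is $t_{2,p}$ that drops when $d_p$ is replaced by $d_p/p$ --- and the two rewritings agree only under extra hypotheses such as $t_{1,p}\le t_{3,p}$ and $t_{2,p}\le t_{3,p}$; in the example, $c(2,3,4)=2\ne 2^{0}$. (Your normalisation worry is equally real: taking $s=1$ for the trivial action makes every factor vanish, contradicting the paper's own remark that the trivial action yields $\varphi(d)$ twists of each degree $d\mid n$.) So your closing suspicion was exactly right, and the correct general formula must keep the subtracted term as $c(d_p/p,s,n_p)$, i.e.\ replace $t_{2,p}$, not $t_{1,p}$, by its value for $d/p$.
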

To prove the theorem, we combine the structure theory of $H^1(G, M)$ for pro-cyclic groups $G$ and cyclic $G$-modules $M$, and the M\"{o}bius inversion formula, together with the facts that the bijection $\theta$ of (\ref{main eqn}) fits into the inflation-restriction sequence and the group $\textrm{Aut}_{\overline{k}}(X_{\kbar}, \lambda_{\kbar})$ is cyclic in this case. \\

This paper is organized as follows:
it seems that (\ref{main eqn}) is known, and the only reference \cite[Theorem IV.7.10]{Mil} that we could have found gives a somewhat brief proof. Hence, in Section \ref{twists of polar}, we give a more detailed proof of (\ref{main eqn}) in Theorem \ref{Intromain1}.
In Section \ref{main result}, as mentioned in the above, 
we first introduce the notion of the degree of a $(\overline{k}/k)$-twist of a polarized abelian variety $(X,\lambda)$ over $k$, and state a result on the number of $k$-isomorphism classes of $(\overline{k}/k)$-twists of a given elliptic curve $(E,O_E)$ over $k$. Afterwards, we prove the cyclicity of the automorphism groups of certain absolutely simple polarized abelian varieties of odd prime dimension over finite fields, and provide the proof of Theorem \ref{main theorem}, together with a concrete example.

\indent In the sequel, let $k$ be a perfect field, $K$ a Galois extension of $k$, $\overline{k}$ an algebraic closure of $k$, and $G_k = \textrm{Gal}(\overline{k}/k),$ the absolute Galois group of $k,$ unless otherwise stated. Also, for an integer $n \geq 1,$ let $\varphi(n)$ denote the number of positive integers that are less than or equal to $n$ and relatively prime to $n,$ and $\mu_n$ the group scheme of $n$-th roots of unity over $k.$ Finally, for two positive integers $m$ and $n$, $(m,n)$ denotes the greatest common divisor of $m$ and $n$, and for an automorphism $f$ of a group $A$, we define $A[f]$ to be the kernel of $f$.

\section{Twists of polarized abelian varieties}\label{twists of polar}
In this section, we introduce the notion of twists of polarized abelian varieties over $k$. To this aim, we first recall the following fundamental result, which was mentioned in Section \ref{intro}.
\begin{proposition}\label{TwistH1}
Let $X$ be a quasi-projective algebraic variety over $k$ and let $\Twist(K/k, X)$ be the set of $k$-isomorphism classes of $(K/k)$-twists of $X$. Then there is a bijective map
$$\theta \colon \Twist(K/k, X) \to H^1(\Gal(K/k), \Aut_K(X_K)).$$
\end{proposition}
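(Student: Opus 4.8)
The plan is to realize $\theta$ through classical Galois descent, by writing down the cocycle attached to a twist and constructing an inverse map out of descent data. Write $G = \Gal(K/k)$, and for $\sigma \in G$ let $\bar\sigma$ denote the $\sigma$-semilinear automorphism of $X_K = X \times_k K$ induced by $\sigma$ acting on the second factor; the $G$-module structure on $\Aut_K(X_K)$ is then ${}^\sigma f = \bar\sigma \circ f \circ \bar\sigma^{-1}$, again a $K$-linear map. Given a $(K/k)$-twist $X'$ with a $K$-isomorphism $\phi \colon X'_K \to X_K$, I would set
\[
a_\sigma = \phi \circ {}^\sigma(\phi^{-1}) \in \Aut_K(X_K),
\]
where ${}^\sigma(\phi^{-1})$ is the Galois conjugate of $\phi^{-1}$ formed using the semilinear $\sigma$-actions of $X$ and $X'$ on their base changes; the two semilinear twists cancel, so $a_\sigma$ is genuinely $K$-linear. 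A short computation using ${}^\sigma({}^\tau(\cdot)) = {}^{\sigma\tau}(\cdot)$ gives the cocycle identity $a_{\sigma\tau} = a_\sigma \cdot {}^\sigma a_\tau$. Replacing $\phi$ by another $K$-isomorphism $\psi \colon X'_K \to X_K$ multiplies the cocycle by the coboundary of $\psi \circ \phi^{-1} \in \Aut_K(X_K)$, and a $k$-isomorphism between twists has the same effect; hence $\theta(X') := [(a_\sigma)_\sigma]$ is a well-defined class in $H^1(G, \Aut_K(X_K))$ depending only on the $k$-isomorphism class of $X'$.

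For the inverse, I would run descent in reverse. Starting from a $1$-cocycle $(a_\sigma)$, define a new semilinear action on $X_K$ by $\sigma \mapsto a_\sigma \circ \bar\sigma$; the cocycle relation is precisely the statement that $a_{\sigma\tau}\circ\overline{\sigma\tau} = (a_\sigma\circ\bar\sigma)\circ(a_\tau\circ\bar\tau)$, so this is a genuine semilinear $G$-action, i.e. a descent datum on $X_K$ for $K/k$. The essential input is the effectivity of Galois descent for quasi-projective varieties (Weil's descent theorem): because $X$ is quasi-projective, the twisted descent datum is effective and yields a variety $X'$ over $k$ equipped with a $K$-isomorphism $\phi \colon X'_K \to X_K$ intertwining $a_\sigma \circ \bar\sigma$ with $\bar\sigma$, and by construction this $\phi$ returns the cocycle $(a_\sigma)$. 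When $K/k$ is infinite, every cocycle is continuous, hence inflated from some finite subextension $\Gal(L/k)$, which reduces the construction to the finite case followed by a direct limit.

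It remains to check that the two constructions are mutually inverse. The composite twist $\to$ cocycle $\to$ descent returns a $k$-variety canonically $k$-isomorphic to $X'$ via $\phi$, while cocycle $\to$ descent $\to$ cocycle reproduces the original class; together these show that $\theta$ is a bijection. I expect the main obstacle to be the effectivity step: verifying that the twisted descent datum actually descends to a variety over $k$ is exactly where quasi-projectivity is indispensable (one must descend an ample invertible sheaf, equivalently a projective embedding), and this is the input to be quoted from Weil's descent theory rather than checked by hand. By comparison, the semilinear-versus-linear bookkeeping and the continuity reduction in the infinite case are routine once the conventions above are fixed.
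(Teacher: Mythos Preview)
Your argument is correct and is precisely the classical Galois--descent proof that the paper invokes by citing Serre \cite[III, \S1, Proposition~5]{Ser2}; the paper gives no independent argument here. The only cosmetic difference is that you take the cocycle $a_\sigma = \phi\circ{}^\sigma(\phi^{-1})$, whereas the paper (in its later Theorem on polarized twists) uses the opposite convention ${}^\sigma\phi\circ\phi^{-1}$, but this is immaterial.
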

\begin{proof}
For a proof, see \cite[Chapter III. \S 1, Proposition 5]{Ser2}.
\end{proof}

Now, we consider a polarized abelian variety $(X, \lambda)$ over $k$: let $X$ be an abelian variety over $k$, and let $\lambda \colon X \to X^\vee$ be a polarization on $X$ that is defined over $k$, where $X^\vee$ denotes the dual abelian variety of $X$. 
We define a homomorphism between two polarized abelian varieties $(X, \lambda)$ and $(X^{\prime}, \lambda^{\prime})$ over $k$ as a homomorphism of abelian varieties $\phi \colon X \to X^{\prime}$ over $k$ that 
satisfies $\lambda = \phi^\vee \circ \lambda' \circ \phi$. A homomorphism $\phi \colon (X,\lambda) \rightarrow (X^{\prime},\lambda^{\prime})$ of polarized abelian varieties over $k$ is called \emph{a $k$-isomorphism} if $\phi$ is defined over $k$ and it admits the inverse homomorphism $\psi \colon (X^{\prime},\lambda^{\prime}) \rightarrow (X,\lambda)$, which is also defined over $k$.

Next, we define the notion of twists of polarized abelian varieties. Let $(X,\lambda)$ be a polarized abelian variety over $k$. Then we introduce the following
\begin{definition} \label{polartwistdef}
A pair $(X^{\prime}, \lambda')$ is called a \emph{$(K/k)$-twist of $(X,\lambda)$} if $(X', \lambda')$ is a polarized abelian variety over $k$, and there exists a $K$-isomorphism $\phi \colon (X'_K, \lambda'_K) \to (X_K, \lambda_K)$ of polarized abelian varieties. The set of $k$-isomorphism classes of $(K/k)$-twists of $(X,\lambda)$ is denoted by $\Twist\lbrb{K/k, (X, \lambda)}$.
\end{definition}

To obtain the main result of this section, we further recall that an element $\sigma \in \Gal(K/k)$ defines a morphism $\Spec \sigma \colon \Spec K \to \Spec K$, which will be written as $\sigma$ (for notational convenience), and induces a $K$-isomorphism $\sigma_{X_K} \colon X_K \to X_K$ via the universal property of the fiber product of $k$-schemes. 

Finally, we also recall the notion of the relative Picard scheme. Let $S$ be a scheme. Following \cite[\S 6.1]{Ben}, we consider a quasi-compact and quasi-separated $S$-scheme $f \colon Y \to S$ with a section $\epsilon \colon S \to Y$, satisfying $f_*(\mathcal{O}_{Y \times_S T}) = \mathcal{O}_T$ for all $S$-schemes $T$.
Let $\cL$ be a line bundle on $Y_T$.
An isomorphism $\alpha \colon \cO_T \to \epsilon_T^*\cL$ is called \emph{a rigidification of $\cL$ along $\epsilon_T$}, and $(\cL, \epsilon_T)$ is called \emph{a rigidified line bundle on $Y_T$}. For rigidified line bundles $(\cL_1, \alpha_1)$ and $(\cL_2, \alpha_2)$ on $Y_T$, a homomorphism between rigidified line bundles is defined as a homomorphism $h \colon \cL_1 \to \cL_2$ of line bundles that satisfies
\begin{equation*}
(\epsilon_T^*h)\circ \alpha_1 = \alpha_2.
\end{equation*}

Now, we consider a functor $P_{Y/S, \epsilon}$ which maps an $S$-scheme $T$ to the group of the isomorphism classes of rigidified line bundles on $Y \times_S T$.
For the case when $S=\Spec K$ and $f \colon Y \to S$ is proper, it is known that the functor $P_{Y/S, \epsilon}$ is representable by an $S$-scheme. (See \cite[(6.3)]{Ben}.) 
In this case, the relative Picard scheme of $f \colon Y \to S$ is isomorphic to the representing object of $P_{Y/S, \epsilon}$, 
and we will write $\widetilde{f}$ for a structure morphism of the representing $S$-scheme of the functor $P_{Y/S, \epsilon}$. 
Throughout this paper, we focus on the case when $S=\Spec K$, and either $Y=\Spec K$ or $Y$ is an abelian variety over $K$.

Then we observe the following useful fact.

\begin{lemma} \label{sigmavee}
For any $\sigma \in \Gal(K/k)$, we have $\sigma_{X_K}^\vee = (\sigma_{X_K^\vee})^{-1}$.
\end{lemma}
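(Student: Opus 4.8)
The plan is to prove that the two morphisms are mutually inverse by passing to the functor of points of the dual, that is, to the relative Picard functor $P_{X_K/K,\epsilon}$ recalled above. First I would record the concrete shape of the Galois action. Since $X_K = X \times_k \Spec K$, the universal property of the fiber product identifies $\sigma_{X_K}$ with the base change $\mathrm{id}_X \times \Spec\sigma$, so that the square with vertical structure maps to $\Spec K$ and bottom arrow $\Spec\sigma$ is Cartesian; the identical description applied to $X^\vee$ gives $\sigma_{X_K^\vee} = \mathrm{id}_{X^\vee} \times \Spec\sigma$. In particular $\sigma \mapsto \sigma_{X_K^\vee}$ is a group (anti)homomorphism, hence $(\sigma_{X_K^\vee})^{-1} = (\sigma^{-1})_{X_K^\vee}$, and the assertion is equivalent to the single identity $\sigma_{X_K^\vee} \circ \sigma_{X_K}^\vee = \mathrm{id}$.

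Next I would unwind the two maps as operations on rigidified line bundles. By the representability of $P_{X_K/K,\epsilon}$ and the definition of the dual of a homomorphism, $\sigma_{X_K}^\vee$ sends a $T$-valued point, namely a rigidified degree-zero line bundle $\cL$ on $X_K \times_K T$, to its pullback $\sigma_{X_K}^*\cL$. On the other hand, the Galois action $\sigma_{X_K^\vee}$ on the representing scheme is, by transport of structure along the semilinear isomorphism $\sigma_{X_K}$, pullback along its inverse, $\cL \mapsto (\sigma_{X_K}^{-1})^*\cL$. Granting these descriptions and using the contravariance of pullback, the composite is
\begin{equation*}
(\sigma_{X_K^\vee}\circ\sigma_{X_K}^\vee)(\cL) \;=\; (\sigma_{X_K}^{-1})^*\bigl(\sigma_{X_K}^*\cL\bigr) \;\cong\; (\sigma_{X_K}\circ\sigma_{X_K}^{-1})^*\cL \;=\; \cL ,
\end{equation*}
functorially in $T$ and compatibly with the rigidifications, so that $\sigma_{X_K^\vee}\circ\sigma_{X_K}^\vee = \mathrm{id}$ by Yoneda.

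A convention-free way to organize the same computation, which I would prefer for rigor, is through the Poincar\'e bundle $\cP$ on $X_K \times_K X_K^\vee$. Because $X$, $X^\vee$ and $\cP$ all descend to $k$, the semilinear automorphism $\sigma_{X_K}\times\sigma_{X_K^\vee}$ fixes $\cP$, i.e. $(\sigma_{X_K}\times\sigma_{X_K^\vee})^*\cP \cong \cP$, while the defining property of the dual morphism reads $(\sigma_{X_K}\times\mathrm{id})^*\cP \cong (\mathrm{id}\times\sigma_{X_K}^\vee)^*\cP$. Factoring $\sigma_{X_K}\times\sigma_{X_K^\vee} = (\sigma_{X_K}\times\mathrm{id})\circ(\mathrm{id}\times\sigma_{X_K^\vee})$ and comparing the two isomorphisms yields $(\mathrm{id}\times\sigma_{X_K}^\vee)^*\cP \cong (\mathrm{id}\times\sigma_{X_K^\vee}^{-1})^*\cP$, and the universality of $\cP$ forces $\sigma_{X_K}^\vee = (\sigma_{X_K^\vee})^{-1}$. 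I expect the main obstacle to be precisely the semilinear bookkeeping in either route: one must check that pullback along the $\sigma$-semilinear map $\sigma_{X_K}$ carries rigidified bundles to rigidified bundles with the correct twisted $K$-structure, and that the contravariance of duality is exactly what converts the action attached to $\sigma$ into the one attached to $\sigma^{-1}$. Getting this twist direction right is what makes the two maps inverse rather than equal.
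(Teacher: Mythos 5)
Your opening reduction --- replacing the lemma by the single identity $\sigma_{X_K^\vee}\circ\sigma_{X_K}^\vee=\mathrm{id}$ via $(\sigma_{X_K^\vee})^{-1}=(\sigma^{-1})_{X_K^\vee}$ --- is correct and is also the paper's first step, and you have correctly located where the difficulty lives (the semilinear bookkeeping). The problem is that both of your routes stop exactly at that point and ``grant'' the one claim that carries the entire content. In the first route, the description of $\sigma_{X_K}^\vee$ as pullback along $\sigma_{X_K}$ is essentially definitional; given that, your other granted description --- that the canonical action $\sigma_{X_K^\vee}$ coming from the $k$-form $X^\vee$ is pullback along $\sigma_{X_K}^{-1}$ --- is logically \emph{equivalent} to the lemma: from the two descriptions the conclusion is the one-line computation you wrote, and conversely the lemma immediately gives that description. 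So invoking ``transport of structure'' here is not an argument but a restatement of what must be proved, namely that the $k$-structure on the dual is compatible with the contravariant pullback action induced by the $k$-structure on $X$. The paper's proof exists precisely to verify this: it interprets $\sigma$ itself as a $K$-scheme with structure morphism $\sigma$ and section $\sigma^{-1}$, proves that its relative Picard scheme satisfies $\widetilde{\sigma}=\sigma$ (this is where the twist direction is actually computed, via the triviality of the functor $P_{\sigma,\sigma^{-1}}$ and its natural isomorphism with $\Hom_K(-,\sigma)$), and then dualizes the factorization $\sigma=f\circ(\sigma_{X_K}\circ\epsilon)$ to extract $\sigma_{X_K}^\vee=(\sigma^{-1})_{X_K^\vee}$.

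The Poincar\'e-bundle route has the same gap in a more visible form: the expressions $\sigma_{X_K}\times\mathrm{id}$ and $\mathrm{id}\times\sigma_{X_K}^\vee$ are not endomorphisms of $X_K\times_K X_K^\vee$ at all, because the two factors cover different automorphisms of $\Spec K$, and a pair of maps defines a map into a fiber product only when their compositions to the base agree. (The full product $\sigma_{X_K}\times\sigma_{X_K^\vee}$ is fine, since both factors cover $\Spec\sigma$; it is exactly the half-twisted factors that fail.) Consequently your ``defining property'' $(\sigma_{X_K}\times\mathrm{id})^*\cP\cong(\mathrm{id}\times\sigma_{X_K}^\vee)^*\cP$, and the factorization you compare it against, do not typecheck as written; making sense of them requires introducing fiber products whose structure morphism is twisted by $\sigma$ --- which is precisely the formalism the paper sets up with $P_{\sigma,\sigma^{-1}}$, and once it is in place one is redoing the paper's computation of $\widetilde{\sigma}$. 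In short, both routes draw the right picture and correctly name the obstacle, but each defers the entire content of the lemma to a step that is asserted rather than proved, and that step is where all of the actual work lies.
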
 
\begin{proof}
First of all, we note that $(\sigma_{X_K^\vee})^{-1} = (\sigma^{-1})_{X_K^\vee}$.
The relative Picard scheme of $\sigma \colon \Spec K \to \Spec K$ whose section $\epsilon$ equals $\sigma^{-1}$,
is the representing object of a functor $P_{\Spec K/\Spec K, \sigma^{-1}}$ that will be written as $P_{\sigma, \sigma^{-1}}$. By the definition of $\widetilde{\sigma}$, two functors $\Hom_K(-, \widetilde{\sigma})$ and $P_{\sigma, \sigma^{-1}}(-)$ are naturally isomorphic.

We will show that $\widetilde{\sigma} = \sigma$. Indeed, let $g \colon T \to \Spec K$ be a $K$-scheme. For $\epsilon = \sigma^{-1}$ on $\Spec K$, 
we define a map $\epsilon_g \colon T \to T\times_{g, K, \sigma}\Spec K$ by the universal property for $T$, $g \colon T \to \Spec K$, and $1_T$. In particular, we have that $p_1 \circ \epsilon_g = 1_T$ and $p_1$ is an isomorphism of $K$-schemes where $p_1 \colon T \times_{g,K,\sigma} \textrm{Spec}~K \rightarrow T$ is the first projection.

Let $(\mathcal{L}_1, \alpha_1)$ and $(\mathcal{L}_2, \alpha_2)$ be two elements in $P_{\sigma,\sigma^{-1}}(g)$ for  a $K$-scheme $g \colon T \to \Spec K$.
Then there is an isomorphism
\begin{equation*}
\alpha_2 \circ \alpha_1^{-1} \colon \epsilon_g^* \mathcal{L}_1 \to \epsilon_g^* \mathcal{L}_2.
\end{equation*}
Now, $p_1^*(\alpha_2 \circ \alpha_1^{-1})$
is an isomorphism of line bundles from $\cL_1$ to $\cL_2$ and satisfies 
\begin{equation} \label{rigidisom}
\epsilon_g^*(p_1^*(\alpha_2\circ\alpha_1^{-1})))\circ \alpha_1 = \alpha_2.
\end{equation}
Hence, for every $K$-scheme $g \colon T \to \Spec K$, the group of rigidified line bundles $P_{\sigma, \sigma^{-1}}(g)$ is trivial.
We choose a representative $(\mathcal{O}_{g \times \sigma}, \alpha)$, where $\mathcal{O}_{g \times \sigma}$ is the structure sheaf of
$T \times_{g, K, \sigma} \Spec K$ and $\alpha \colon \mathcal{O}_T \to \epsilon_g^*\mathcal{O}_{g \times \sigma}$ is a rigidification.

Now, $p_2 \circ p_{1}^{-1}$ is an element in $\Hom_K(g, \sigma)$.
Since $\sigma$ is isomorphic to $1_K$, which is the terminal object of the category of $K$-schemes,
this is the unique object of $\Hom_K(g, \sigma)$.
We define an isomorphism $\eta_g \colon \Hom_K(g, \sigma) \to P_{\sigma, \sigma^{-1}}(g)$ by
$$ \eta_g(p_2 \circ p_1^{-1}) = (p_2^* \cO_{\Spec K}, \beta)$$ 
where $\beta \colon \cO_T \to \epsilon_g^*p_2^* \cO_{\Spec K}$ is a rigidification.
(Note that $(p_2^*\cO_{\Spec K},\beta)$ is isomorphic to $(\cO_{g\times \sigma}, \alpha)$.)

The claim is that this $\eta$ gives a natural isomorphism between the functors $\Hom_K(-,\sigma)$ and $P_{\sigma, \sigma^{-1}}(-)$. 
Indeed, let $h \colon T' \to \Spec K$ be another $K$-scheme and $\phi \colon g \to h$ be a $K$-homomorphism.
We also use $p_{1,g}, p_{2, g}, p_{1, h}$ and $p_{2,h}$ to distinguish the projection morphisms of schemes $T \times_{g, K, \sigma} \Spec K$ and $T' \times_{h, K, \sigma} \Spec K$.
Then we can lift $\phi$ to $T \times_{g,K,\sigma} \Spec K \to T' \times_{h,K,\sigma} \Spec K$ 
by the universal property so that we have $p_{2,h} \circ \phi = p_{2,g}$ and $\phi \circ p_{1,g} = p_{1,h} \circ \phi$.
Then $\phi^* \colon \Hom_K(h, \sigma) \to \Hom_K(g, \sigma)$ is defined by
$$\phi^*(p_{2,h} \circ p_{1,h}^{-1})
= p_{2,h} \circ p_{1,h}^{-1} \circ \phi,  $$
where the latter is equal to $p_{2,g} \circ p_{1,g}^{-1}$ because we have
$$p_{2,h} \circ p_{1,h}^{-1} \circ \phi 
= p_{2,h} \circ \phi \circ p_{1,g}^{-1}
= p_{2,g} \circ p_{1,g}^{-1}.$$
On the other hand, the corresponding map $\phi^* \colon P_{\sigma, \sigma^{-1}}(h) \to P_{\sigma, \sigma^{-1}}(g)$ is given by
$$ \phi^*(p_{2,h}^* \mathcal{O}_{\Spec K}, \gamma) = (\phi^*p_{2,h}^* \mathcal{O}_{\Spec K}, \phi^*\gamma ),$$
where $\gamma \colon \mathcal{O}_{T'} \rightarrow \epsilon_h^* p_{2,h}^* \mathcal{O}_{\Spec K}$ is a rigidification. 
Since we have $p_{2,h} \circ \phi = p_{2,g}$, the pullback $\phi^*\gamma \colon \cO_T \to \epsilon_g^*p_{2,g}^* \cO_{\Spec K}$ is a rigidification of $p_{2,g}^*\cO_{\Spec K}$. 
Furthermore, we can see that $\phi^*(p_{2,h}^*\cO_{\Spec K}, \gamma)$ is isomorphic to $(p_{2,g}^*\cO_{\Spec K}, \beta)$, as in (\ref{rigidisom}). Therefore we get that $\eta_g \circ \phi^* = \phi^* \circ \eta_h$.
Since $\eta$ gives an isomorphism for any $K$-scheme, two functors $\Hom_K(-, \sigma)$ and $P_{\sigma, \sigma^{-1}}(-)$ are naturally isomorphic, and hence, we have $\widetilde{\sigma} = \sigma$.\\
 \indent Now, let $\epsilon \colon \Spec K \to X_K$ be a section of $f \colon X_K \to \Spec K$.
By taking dual, we have $\epsilon^\vee = \widetilde{f}$.
Similarly, by taking dual on $\sigma = f \circ (\sigma_{X_K} \circ \epsilon)$, we have 
$\widetilde{\sigma} \circ (\sigma_{X_K} \circ \epsilon)^\vee = \widetilde{f}$, which, in turn, implies that $\sigma_{X_K}^\vee = (\sigma^{-1})_{X_K^\vee}$. \\
\indent This completes the proof.
\end{proof}

As a consequence of the above lemma, we obtain the following.
\begin{corollary}\label{sigmaveecor}
Let $\phi \colon (X_K^{\prime}, \lambda_K^{\prime}) \rightarrow (X_K , \lambda_K)$ be a $K$-isomorphism of polarized abelian varieties and let $\sigma \in \Gal(K/k).$ Let ${^{\sigma}\phi}= \sigma_{X_K} \circ \phi \circ \sigma^{-1}_{X_K'}$. Then the map ${^{\sigma}\phi} \colon (X_K^{\prime}, \lambda_K^{\prime}) \rightarrow (X_K , \lambda_K)$ is also a $K$-isomorphism of polarized abelian varieties. Furthermore, we have $({}^\sigma\phi)^\vee = {}^\sigma(\phi^\vee)$.
\end{corollary}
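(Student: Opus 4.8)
The statement has two parts: that ${}^\sigma\phi$ is again a $K$-isomorphism of polarized abelian varieties, and the compatibility $({}^\sigma\phi)^\vee = {}^\sigma(\phi^\vee)$. The assertion that ${}^\sigma\phi$ is an isomorphism of abelian varieties is immediate, since it is the composite $\sigma_{X_K} \circ \phi \circ \sigma^{-1}_{X_K'}$ of the isomorphism $\phi$ with the automorphisms $\sigma_{X_K}$ and $\sigma^{-1}_{X_K'}$, and hence has inverse $\sigma_{X_K'} \circ \phi^{-1} \circ \sigma^{-1}_{X_K}$; the semilinearities of the two outer factors cancel, so it is defined over $K$. The real content is therefore the polarization compatibility and the dual identity, and the engine for both is Lemma \ref{sigmavee} together with the fact that $\lambda$ and $\lambda'$, being defined over $k$, are fixed by the conjugation action, i.e. ${}^\sigma\lambda_K = \lambda_K$ and ${}^\sigma\lambda'_K = \lambda'_K$.

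The plan is to establish the dual identity first, since it also feeds into the polarization check. Taking duals reverses the order of composition, so
\begin{equation*}
({}^\sigma\phi)^\vee = (\sigma^{-1}_{X_K'})^\vee \circ \phi^\vee \circ (\sigma_{X_K})^\vee.
\end{equation*}
Applying Lemma \ref{sigmavee} to $X$ gives $(\sigma_{X_K})^\vee = (\sigma^{-1})_{X_K^\vee}$, while applying it to $X'$ with $\sigma^{-1}$ in place of $\sigma$ gives $(\sigma^{-1}_{X_K'})^\vee = \sigma_{(X_K')^\vee}$. Substituting these, and recalling that $\phi^\vee \colon X_K^\vee \to (X_K')^\vee$, the right-hand side becomes $\sigma_{(X_K')^\vee} \circ \phi^\vee \circ (\sigma^{-1})_{X_K^\vee}$, which is exactly ${}^\sigma(\phi^\vee)$ by the definition of the conjugation action applied to $\phi^\vee$. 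This proves the second assertion.

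For the polarization compatibility I must check that $({}^\sigma\phi)^\vee \circ \lambda_K \circ {}^\sigma\phi = \lambda'_K$. Expanding both factors and inserting the dual identity just proved, the left-hand side equals
\begin{equation*}
\sigma_{(X_K')^\vee} \circ \phi^\vee \circ (\sigma^{-1})_{X_K^\vee} \circ \lambda_K \circ \sigma_{X_K} \circ \phi \circ \sigma^{-1}_{X_K'}.
\end{equation*}
Since $\lambda$ is defined over $k$ we have $(\sigma^{-1})_{X_K^\vee} \circ \lambda_K = \lambda_K \circ \sigma^{-1}_{X_K}$, which lets me slide $\lambda_K$ leftward and cancel $\sigma^{-1}_{X_K} \circ \sigma_{X_K} = \mathrm{id}$; the middle then collapses via the polarization condition $\phi^\vee \circ \lambda_K \circ \phi = \lambda'_K$ for $\phi$. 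What remains is $\sigma_{(X_K')^\vee} \circ \lambda'_K \circ \sigma^{-1}_{X_K'} = {}^\sigma\lambda'_K$, which equals $\lambda'_K$ because $\lambda'$ is defined over $k$. This gives the first assertion.

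The only genuinely delicate point is the bookkeeping in the dual identity: one must keep straight which abelian variety (namely $X$ versus $X'$, and the passage to the dual) each instance of $\sigma_{(-)}$ lives over, and whether Lemma \ref{sigmavee} is being invoked with $\sigma$ or with $\sigma^{-1}$, since the lemma interchanges the two. Everything else is a formal manipulation of composites, once the commutation ${}^\sigma\lambda_K = \lambda_K$ coming from the $k$-rationality of the polarizations is recorded.
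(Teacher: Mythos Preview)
Your proof is correct and follows essentially the same approach as the paper: both reduce everything to Lemma~\ref{sigmavee} together with the $k$-rationality of $\lambda$ and $\lambda'$. The only cosmetic difference is that the paper proves the polarization compatibility first (using the pullback notation $\sigma_{X_K}^*\lambda_K=\lambda_K$) and the dual identity second, whereas you establish the dual identity first and then feed it into the polarization check; the underlying computations are the same.
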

\begin{proof}
By construction, it suffices to prove that ${^{\sigma} \phi}$ preserves the polarizations. By Lemma \ref{sigmavee}, together with the fact that $\lambda$ is defined over $k$, we have
$$\sigma_{X_K}^\vee \circ \lambda_K \circ \sigma_{X_K} = \sigma_{X_K^\vee}^{-1} \circ \lambda_K \circ \sigma_{X_K}
= \lambda_K,$$
i.e.\ we get that $\sigma_{X_K}^* \lambda_K = \lambda_K$. By a similar argument, we also have $(\sigma_{X^{\prime}_K}^{-1})^* \lambda_{K}'=\lambda_{K}'.$ Hence, it follows that
$$({}^\sigma\phi)^*\lambda_K = (\sigma_{X_K}\circ\phi\circ\sigma_{X_K'}^{-1})^*\lambda_K = (\sigma_{X^{\prime}_K}^{-1})^* (\phi^* (\sigma_{X_K}^* \lambda_K))= (\sigma_{X^{\prime}_K}^{-1})^* (\phi^* \lambda_K)= (\sigma_{X^{\prime}_K}^{-1})^* \lambda_K'=\lambda_K'.$$
For the second assertion, note that we have
$$ ({}^\sigma \phi)^\vee =
 (\sigma_{X_K} \circ \phi \circ \sigma_{X_K'}^{-1})^\vee
= (\sigma_{X_K'}^{-1})^\vee \circ \phi^\vee \circ (\sigma_{X_K})^\vee 
= \sigma_{(X_K')^{\vee}} \circ \phi^\vee \circ \sigma_{X_K^\vee}^{-1} 
= {}^\sigma(\phi^\vee),$$
by Lemma \ref{sigmavee}. \\
\indent This completes the proof.
\end{proof}

Now, we are ready to prove the main result of this section.

\begin{theorem} \label{Intromain1}
Let $\Twist(K/k, (X, \lambda))$ be the set of $k$-isomorphism classes of $(K/k)$-twists of a polarized abelian variety $(X, \lambda)$ over $k$.
Then there is a bijective map
$$\theta \colon \Twist(K/k, (X, \lambda)) \to H^1(\Gal(K/k), \Aut_{K}(X_{K}, \lambda_{K})).$$
\end{theorem}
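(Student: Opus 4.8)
The plan is to establish the bijection by the standard twist--cohomology dictionary, adapting the proof of Proposition \ref{TwistH1} to the polarized category, where the preliminary Lemma \ref{sigmavee} and Corollary \ref{sigmaveecor} are exactly the tools that keep track of the polarization. Throughout, $\Gal(K/k)$ acts on $\Aut_K(X_K, \lambda_K)$ by $\sigma \cdot \psi = {}^\sigma\psi := \sigma_{X_K}\circ\psi\circ\sigma_{X_K}^{-1}$; by Corollary \ref{sigmaveecor} (applied with $(X', \lambda') = (X, \lambda)$ and $\phi = \psi$) this action preserves the subgroup $\Aut_K(X_K, \lambda_K) \subseteq \Aut_K(X_K)$, so that $H^1(\Gal(K/k), \Aut_K(X_K, \lambda_K))$ is defined.

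First I would define the map $\theta$. Given a $(K/k)$-twist $(X', \lambda')$ together with a $K$-isomorphism $\phi\colon (X'_K, \lambda'_K)\to (X_K, \lambda_K)$, set
$$c_\phi(\sigma) = \phi\circ({}^\sigma\phi)^{-1}, \qquad \sigma\in\Gal(K/k).$$
By Corollary \ref{sigmaveecor}, ${}^\sigma\phi$ is again a $K$-isomorphism of polarized abelian varieties, so $c_\phi(\sigma)\in\Aut_K(X_K, \lambda_K)$; a direct computation, using that $\sigma\mapsto\sigma_{X_K}$ defines a group action, shows that $c_\phi$ is a $1$-cocycle. I would then check that $\theta\colon [(X', \lambda')]\mapsto [c_\phi]$ is well defined: replacing $\phi$ by another $K$-isomorphism $\phi'$ multiplies $c_\phi$ by the coboundary of $\phi\circ(\phi')^{-1}\in\Aut_K(X_K, \lambda_K)$, and a $k$-isomorphism of twists, being Galois-equivariant, yields cohomologous cocycles. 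The same computation, run backwards, gives injectivity: cohomologous cocycles produce a Galois-equivariant $K$-isomorphism between the twists that descends to a polarization-preserving $k$-isomorphism.

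The main obstacle is surjectivity, i.e.\ the effectivity of descent in the polarized setting. Here I would leverage Proposition \ref{TwistH1} for the underlying varieties: given $c\in Z^1(\Gal(K/k), \Aut_K(X_K, \lambda_K))$, view it as a cocycle valued in $\Aut_K(X_K)$ and apply Proposition \ref{TwistH1} to obtain an abelian variety $X'$ over $k$ and a $K$-isomorphism $\phi\colon X'_K\to X_K$ with $c(\sigma) = \phi\circ({}^\sigma\phi)^{-1}$. It then remains to produce the polarization: set $\lambda' := \phi^\vee\circ\lambda_K\circ\phi\colon X'_K\to(X'_K)^\vee$. The key claim is that $\lambda'$ descends to $k$, i.e.\ ${}^\sigma\lambda' = \lambda'$ for all $\sigma$, where ${}^\sigma\lambda' = \sigma_{(X'_K)^\vee}\circ\lambda'\circ\sigma_{X'_K}^{-1}$.

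This is precisely where the preliminary results are used. Rewriting $\sigma_{(X'_K)^\vee}\circ\phi^\vee$ and $\phi\circ\sigma_{X'_K}^{-1}$ via Corollary \ref{sigmaveecor} and the relation ${}^\sigma\phi = c(\sigma)^{-1}\circ\phi$, then applying the Galois-equivariance of $\lambda_K$ (which follows from $\lambda_K$ being defined over $k$, as in the proof of Corollary \ref{sigmaveecor}) together with the identity $c(\sigma)^\vee\circ\lambda_K\circ c(\sigma) = \lambda_K$, one collapses the expression to $\phi^\vee\circ\lambda_K\circ\phi = \lambda'$. By Galois descent for morphisms (and the fact that formation of the dual abelian variety commutes with base change), $\lambda'$ is then the base change of a $k$-morphism $X'\to(X')^\vee$, which is a polarization since $\lambda'_K = \phi^*\lambda_K$ is one; thus $(X', \lambda')$ is a $(K/k)$-twist whose class maps to $[c]$. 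I expect this descent computation for the polarization to be the delicate point, since it is the step that genuinely uses the polarized structure rather than formal cohomological bookkeeping.
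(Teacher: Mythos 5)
Your proposal is correct, and its core coincides with the paper's: both define $\theta$ by sending a twist, via a chosen $K$-isomorphism $\phi$, to the cocycle built out of $\phi$ and ${}^\sigma\phi$, and both use Corollary \ref{sigmaveecor} to verify that this cocycle takes values in $\Aut_K(X_K,\lambda_K)$ (the only cosmetic difference is the cocycle convention: the paper uses ${}^\sigma\phi\circ\phi^{-1}$ where you use $\phi\circ({}^\sigma\phi)^{-1}$; both satisfy a valid cocycle identity under the corresponding action convention). Where you genuinely diverge is in how the proof is completed. After checking the cocycle identity, the paper simply says to ``proceed in a similar fashion as in the proof of \cite[Chapter III. \S 1, Proposition 5]{Ser2}'', i.e.\ it re-runs Serre's descent argument inside the polarized category and leaves the effectivity of descent implicit. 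You instead bootstrap surjectivity from the unpolarized Proposition \ref{TwistH1}: view the cocycle $c$ as valued in $\Aut_K(X_K)$, obtain the underlying variety $X'$ and a $K$-isomorphism $\phi$ realizing $c$, then define $\lambda'=\phi^\vee\circ\lambda_K\circ\phi$ and descend it using ${}^\sigma\lambda' = ({}^\sigma\phi)^\vee\circ\lambda_K\circ{}^\sigma\phi = \phi^\vee\circ(c(\sigma)^{-1})^\vee\circ\lambda_K\circ c(\sigma)^{-1}\circ\phi=\lambda'$, which uses exactly Corollary \ref{sigmaveecor}, the $k$-rationality of $\lambda$, and $c(\sigma)\in\Aut_K(X_K,\lambda_K)$. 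This decomposition buys a more self-contained argument: the only black box is the unpolarized twist--cohomology bijection already quoted as Proposition \ref{TwistH1}, plus Galois descent for the single morphism $\lambda'$, whereas the paper's shorter ending asks the reader to redo Serre's entire descent construction with polarizations carried along. Two minor points you should make explicit: (i) Proposition \ref{TwistH1} a priori only gives $\phi_0$ with $[c_{\phi_0}]=[c]$ in $H^1(\Gal(K/k),\Aut_K(X_K))$, so to get $c_\phi=c$ on the nose you must replace $\phi_0$ by $a\circ\phi_0$ for a suitable $a\in\Aut_K(X_K)$ (harmless, even though $a$ need not preserve $\lambda_K$, since $\lambda'$ is defined through the new $\phi$); (ii) the descended morphism $X'\to (X')^\vee$ is a polarization because being a polarization can be checked after the faithfully flat base change to $K$, where it equals $\phi^*\lambda_K$.
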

\begin{proof}
For an element $(X', \lambda')$ of $\mathrm{Twist}(K/k, (X, \lambda))$, we let $\theta(X^{\prime}, \lambda^{\prime})(\sigma):= \theta(\phi)(\sigma) = {^{\sigma}\phi} \circ \phi^{-1}$ for all $\sigma \in \Gal(K/k)$, where $\phi \colon (X_K^{\prime}, \lambda_K^{\prime}) \rightarrow (X_K, \lambda_K)$ is a $K$-isomorphism. 
With the help of Corollary \ref{sigmaveecor}, we can see that ${^\sigma}\phi$ is a $K$-homomorphism of polarized abelian varieties, and
\begin{equation*}
({^\sigma}\phi \circ \phi^{-1})^\vee \circ \lambda_K \circ ({^\sigma}\phi \circ \phi^{-1})
=(\phi^{-1})^\vee \circ ({^\sigma}\phi)^\vee \circ \lambda_K \circ {^\sigma}\phi \circ \phi^{-1}
= \lambda_K,
\end{equation*} 
which implies that $\theta(\phi)(\sigma) \in \Aut_K(X_K, \lambda_K)$ for all $\sigma$. Also, note that we have
\begin{equation*}
\theta(\phi)(\sigma\tau) = {^{\sigma \tau}\phi}\circ\phi^{-1} = {^\tau ({^{\sigma}\phi}\circ\phi^{-1})}
\circ({^{\tau}\phi}\circ\phi^{-1}) = {^\tau (\theta(\phi)(\sigma))} \theta(\phi)(\tau)
\end{equation*}
as $K$-automorphisms of the polarized abelian variety $(X_K, \lambda_K).$ Now, we can proceed in a similar fashion as in the proof of \cite[Chapter III. $\S1$, Proposition 5]{Ser2} to obtain the desired result. \\
\indent This completes the proof.
\end{proof}

As an one dimensional analogue of Theorem \ref{Intromain1}, we recall the following well-known result.
\begin{proposition}\label{ell rem}
Consider an elliptic curve $(E, O_E)$ over $k$ where $O_E$ is the identity of $E.$ Then there is a bijective map between $\Twist(\overline{k}/k, (E,O_E))$ and $H^1 (G_k, \Aut_{\overline{k}}(E_{\overline{k}})).$ 
\end{proposition}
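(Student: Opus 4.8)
The plan is to realize the elliptic curve $(E, O_E)$ as a principally polarized abelian variety and then invoke Theorem \ref{Intromain1} directly. Concretely, let $\lambda_E \colon E \to E^\vee$ be the canonical principal polarization attached to $O_E$, namely the isomorphism induced by the ample divisor $[O_E]$ (equivalently, $P \mapsto [P]-[O_E]$ on $\overline{k}$-points). Since $\lambda_E$ is built from $O_E$ and the formation of the dual, it is defined over $k$, so $(E, \lambda_E)$ is a polarized abelian variety over $k$ in the sense of Section \ref{twists of polar}. I would then apply Theorem \ref{Intromain1} with $X = E$, $\lambda = \lambda_E$, and $K = \overline{k}$, after checking that the two inputs of that theorem---the automorphism group and the twist set---agree with their elliptic-curve counterparts.

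First I would check that $\Aut_{\overline{k}}(E_{\overline{k}}, (\lambda_E)_{\overline{k}}) = \Aut_{\overline{k}}(E_{\overline{k}})$, where $\Aut_{\overline{k}}(E_{\overline{k}})$ denotes automorphisms of $E_{\overline{k}}$ as an abelian variety, hence those fixing $O_E$. Any such $\phi$ fixes $O_E$, so $\phi^\ast[O_E]=[O_E]$ and therefore $\phi^\ast(\lambda_E)_{\overline{k}}=(\lambda_E)_{\overline{k}}$; equivalently $\phi^\vee \circ (\lambda_E)_{\overline{k}} \circ \phi = (\lambda_E)_{\overline{k}}$. Thus every abelian-variety automorphism preserves the canonical polarization, while the reverse inclusion is immediate from the definition of morphisms of polarized abelian varieties. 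This gives the equality of automorphism groups, and hence the equality of the target cohomology sets $H^1(G_k, \Aut_{\overline{k}}(E_{\overline{k}}, (\lambda_E)_{\overline{k}})) = H^1(G_k, \Aut_{\overline{k}}(E_{\overline{k}}))$.

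Next I would identify the two twist sets, that is, show $\Twist(\overline{k}/k, (E, O_E)) = \Twist(\overline{k}/k, (E, \lambda_E))$ in the sense of Definition \ref{polartwistdef}. Given a twist $E'$ of $E$ as an elliptic curve, its own canonical polarization $\lambda_{E'}$ makes $(E', \lambda_{E'})$ a polarized twist, because the defining $\overline{k}$-isomorphism preserves identities and hence the canonical polarizations. Conversely, for a polarized twist $(E', \lambda')$, the abelian variety $E'$ has dimension one, hence is an elliptic curve with identity $O_{E'}$, and $\lambda'$ has degree $1$ since the degree is preserved under the $\overline{k}$-isomorphism to $(E_{\overline{k}}, (\lambda_E)_{\overline{k}})$. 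As the principal polarization on an elliptic curve is unique (the N\'eron--Severi group $\mathrm{NS}(E'_{\overline{k}}) \cong \mathbb{Z}$ has the canonical polarization as positive generator), we get $\lambda' = \lambda_{E'}$, so $(E', \lambda')$ records exactly the elliptic-curve twist $E'$. This matches the two sets, and combined with the previous paragraph, Theorem \ref{Intromain1} yields the desired bijection.

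The main obstacle is the twist identification in the third step: one must ensure that passing from the bare elliptic curve to the principally polarized abelian variety neither adds nor loses twists. The crux is the uniqueness of the principal polarization on an elliptic curve, which forces $\lambda'$ to be the canonical polarization and thereby renders the polarized datum equivalent to the elliptic-curve datum; by comparison, the automorphism-group identification in the first step is routine.
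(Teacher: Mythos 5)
Your proof is correct, but it takes a genuinely different route from the paper's. The paper disposes of Proposition \ref{ell rem} by simply citing the classical treatment of twists of elliptic curves in \cite[\S X.5]{Sil1}, where the bijection is obtained from the general descent formalism for curves with a marked point (essentially a pointed refinement of Proposition \ref{TwistH1}). You instead deduce the proposition from the paper's own Theorem \ref{Intromain1}, by endowing $E$ with its canonical principal polarization $\lambda_E$ and checking two compatibilities: (i) $\Aut_{\overline{k}}(E_{\overline{k}}, (\lambda_E)_{\overline{k}}) = \Aut_{\overline{k}}(E_{\overline{k}})$, since an automorphism fixing $O_E$ pulls back the divisor $[O_E]$ to itself and hence preserves $\lambda_E$; and (ii) the two twist sets coincide, the crux being that a polarized twist $(E',\lambda')$ must carry a polarization of degree one, and the uniqueness of the principal polarization on an elliptic curve (via $\mathrm{NS}(E'_{\overline{k}}) \cong \mathbb{Z}$) forces $\lambda'$ to be the canonical one, so no twists are gained or lost. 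Both steps are sound; the only points left implicit --- that a $\overline{k}$-isomorphism of curves fixing the origins is automatically a homomorphism of abelian varieties (rigidity), so elliptic-curve twists and one-dimensional abelian-variety twists agree, and that the object-level correspondence also respects $k$-isomorphism classes --- are routine. What your route buys is self-containedness: the proposition becomes a corollary of Theorem \ref{Intromain1}, exhibiting the consistency of the polarized framework with the elliptic-curve case (in the spirit of Remark \ref{ell prop ded}). What the paper's citation buys is brevity, and independence from the N\'eron--Severi input needed for the uniqueness of the principal polarization.
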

For a detailed discussion, see \cite[\S X.5]{Sil1}.



 
\section{Main result} \label{main result}
In this section, we give the main result of this paper. Throughout this section, let $k$ be a finite field of characteristic $p>0$. We begin with introducing the following notion.
\begin{definition} \label{degdef}
Let $(X,\lambda)$ be a polarized abelian variety over $k$ and let $(X^{\prime}, \lambda^{\prime})$ be a $(\overline{k}/k)$-twist of $(X, \lambda)$. 
We define the degree of $(X^{\prime}, \lambda^{\prime})$ to be the quantity
$$\min\lcrc{ [L:k] ~|~ \textrm{there is an $L$-isomorphism}~ \phi \colon (X'_L, \lambda'_L) \to (X_L, \lambda_L) ~\textrm{of polarized abelian varieties} },$$ 
and denote it by $\mathrm{deg}(X^{\prime},\lambda^{\prime}).$
\end{definition}

For example, a $(\overline{k}/k)$-twist $(X',\lambda')$ of $(X,\lambda)$ is of degree one if and only if it is $k$-isomorphic to $(X,\lambda)$. \\

Now, for an integer $m \geq 1,$ let $M= |\mu_m (k)|$. Then we have $\mu_m(k) = \mu_M(k)$, and for all integers $d \mid M$, we get that $\mu_d(\overline{k}) \subset k^\times$.
Then since $G_k$ acts trivially on $\mu_d(\overline{k})$, a character in $\Hom(G_k, \mu_d(\overline{k}))$ actually defines a cocycle in $H^1(G_k, \mu_d(\overline{k}))$. We also note that 
$$ H^1(G_k, \mu_d(\overline{k})) \hookrightarrow H^1(G_k, \mu_M(\overline{k})) \cong H^1(G_k, \mu_m(\overline{k})),$$
as abelian groups. Indeed, since the sequence
$$\xymatrix{1 \ar[r] & \mu_d(k) \ar[r] &\mu_M(k) \ar[r] & (\mu_{M}/\mu_d)(k) \ar[r] & H^1(G_k, \mu_d(\overline{k})) \ar[r] & H^1(G_k, \mu_M(\overline{k}))} $$
is exact, and the map $\mu_M(k) \to (\mu_{M}/\mu_d)(k)$ is surjective, we get that $H^1(G_k, \mu_d(\overline{k})) \hookrightarrow H^1(G_k, \mu_M(\overline{k}))$.
On the other hand, since $\mu_M(\overline{k}) \subset k^\times$ and $\mu_M(k) = \mu_m(k)$, we have
$$ k^\times/(k^\times)^M = k^\times/(k^\times)^m,$$
and each one is isomorphic to $H^1(G_k, \mu_M)$ and $H^1(G_k, \mu_m)$, respectively, by Hilbert's 90.
More concretely, for each $j=m,M,$ an element $\alpha \in k^\times$ gives rise to cocycles
\begin{equation*}
c_{\alpha, m}(\sigma) = \sigma(\beta)/\beta \quad \textrm{and} \quad c_{\alpha, M}(\sigma) = \sigma(\gamma)/\gamma
\end{equation*}
for some $\beta, \gamma \in \kbar^\times$ such that $\beta^m = \alpha$ and $\gamma^M = \alpha$,
and the isomorphism between $H^1(G_k, \mu_m)$ and $H^1(G_k, \mu_M)$ maps $c_{\alpha, m}$ to $c_{\alpha, M}$. \\

Now, we consider the problem of counting the number of twists of elliptic curves over finite fields with a given degree. 
For example, it is well known  that the number of degree 2 twists of elliptic curves $E$ over finite fields such that $j(E) \neq 0, 1728$ is equal to one, and we can generalize this fact easily for the twists of elliptic curves over finite fields of arbitrary degree as in Proposition \ref{ECtwists} below.
To this aim, we recall that if $\textrm{char}~k=p \geq 5$, then $\Aut_{\overline{k}}(E_{\kbar}) \cong \mu_m (\overline{k})$ as $G_k$-modules, 
where $m = 2, 4, 6$ if $j(E) \neq 0, 1728$, $j(E) = 1728,$ and $j(E)=0$, respectively (see \cite[Corollary III.10.1]{Sil1}). In this situation, we can obtain the following.

\begin{proposition} \label{ECtwists}
Let $(E, O_E)$ be an elliptic curve over $k$ with $p \geq 5$, 
$N(d)$ the number of $(\overline{k}/k)$-twists of $(E, O_E)$ of degree $d$, $N = \sum_{d \geq 1} N(d)$, and let $M= |\mu_m(k)|$. Then we have $N = M$ and 
$$ N(d) = \left\{ \begin{array}{cc} 
\varphi(d) & \textrm{if}~ d \mid M ,\\
0 & \textrm{otherwise}.
\end{array}\right.$$
\end{proposition}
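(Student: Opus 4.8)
The plan is to move the entire count into Galois cohomology and then read off the degree of a twist from the order of the corresponding cohomology class. Set $q = |k|$, so that $G_k \cong \widehat{\bZ}$ is topologically generated by $\Frob_k$ and the unique extension of $k$ of degree $d$ is $k_d = \bF_{q^d}$, with $G_{k_d} = \overline{\langle \Frob_k^{\,d}\rangle}$ the unique open subgroup of index $d$. By Proposition \ref{ell rem} together with the isomorphism $\Aut_{\kbar}(E_{\kbar}) \cong \mu_m(\kbar)$ of $G_k$-modules, $\Twist(\kbar/k,(E,O_E))$ is identified with $H^1(G_k, \mu_m(\kbar))$. Using the isomorphism $H^1(G_k,\mu_M(\kbar)) \cong H^1(G_k,\mu_m(\kbar))$ recalled above, together with $\mu_M(\kbar) \subseteq k^\times$ (so that $G_k$ acts trivially on $\mu_M(\kbar)$), every class is represented by a unique continuous character
\[
\chi \colon G_k \to \mu_M(\kbar), \qquad \chi \longleftrightarrow \chi(\Frob_k) \in \mu_M(\kbar) \cong \bZ/M\bZ,
\]
which identifies $H^1(G_k,\mu_m(\kbar))$ with the cyclic group $\bZ/M\bZ$. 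In particular $N = |H^1(G_k,\mu_m(\kbar))| = M$.

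The heart of the matter is the claim that the degree of the twist attached to $\chi$ equals the order of $\chi$ in $\bZ/M\bZ$. By Definition \ref{degdef} and the compatibility of $\theta$ with base change, this degree is the least $d$ for which $\res_{k_d}$ kills the class of $\chi$; here I would invoke, as in the proof of Theorem \ref{main theorem}, that $\theta$ fits into the inflation--restriction sequence, so that an $L$-isomorphism exists exactly when the class dies under restriction to $G_L$. Suppose $\chi$ has order $d$. For the bound $\deg \le d$, note that $\chi(\Frob_k)$ is then a primitive $d$-th root of unity, so $\ker \chi = \overline{\langle\Frob_k^{\,d}\rangle} = G_{k_d}$; hence $\chi|_{G_{k_d}}$ is the trivial homomorphism and $\res_{k_d}$ annihilates the class. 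For the reverse bound $\deg \ge d$, I would use the restriction--corestriction relation $\operatorname{cor}_{k_{d'}/k}\circ \res_{k_{d'}/k} = [k_{d'}:k] = d'$ on $H^1(G_k,\mu_m(\kbar))$: if the class is killed by $\res_{k_{d'}}$, then $d'$ annihilates it, whence $d = \ord(\chi) \mid d'$ and $d' \ge d$. Combining the two bounds gives $\deg = \ord(\chi)$.

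Granting this, the proposition is immediate. Since $\ord(\chi) \mid M$ for every $\chi$, no twist can have degree $d \nmid M$, so $N(d) = 0$ there. For $d \mid M$, the twists of degree $d$ correspond exactly to the characters of order $d$, i.e.\ to the primitive $d$-th roots of unity in $\mu_M(\kbar)$, of which there are $\varphi(d)$; thus $N(d) = \varphi(d)$. As a consistency check, $N = \sum_{d \mid M}\varphi(d) = M$, recovering the first assertion.

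The step I expect to require the most care is the lower bound $\deg \ge d$. The tempting shortcut---arguing that the kernel field $k_d$ of the character is visibly the smallest field of trivialization---does not by itself suffice, because a priori a nonzero character could become a coboundary after restriction to a proper subextension; ruling this out is exactly what the restriction--corestriction identity accomplishes. The only other point needing attention is the compatibility of the bijection $\theta$ with restriction used to translate ``degree'' into ``order of vanishing under $\res$,'' which I would justify through the inflation--restriction sequence as indicated above.
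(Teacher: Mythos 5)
Your proposal follows the same overall route as the paper's proof: identify $\Twist(\kbar/k,(E,O_E))$ with $\Hom(G_k,\mu_M(\kbar))$ through the chain (\ref{ell isoms}), show that the degree of a twist equals the order of its character, and count the characters of each order. Within that route you make two substitutions. For the divisibility $\ord(\chi)\mid\deg$, the paper writes $\chi=\chi_\alpha$ with $\beta^d=\alpha$ and argues that an $L$-isomorphism forces $\sigma(\beta)=\beta$ for all $\sigma\in\Gal(\kbar/L)$, hence $k(\beta)\subseteq L$; you instead use $\operatorname{cor}\circ\res=[k_{d'}:k]$ applied on $H^1(G_k,\mu_m(\kbar))$, which is valid and arguably cleaner since it requires no choice of Kummer generator. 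For the final count, the paper only derives the inequality $\varphi(d)\le N(d)$ and closes with $M=\sum_{d\mid M}\varphi(d)\le \sum_{d}N(d)=N=M$, whereas you count by the exact bijection; given the biconditional ``degree equals order,'' both work.

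Where I would push back is your closing paragraph: you single out the lower bound $\deg\ge d$ as the delicate step, but that direction is formally robust (your cor-res argument and the paper's ``$\beta\in L$'' argument are both airtight). The genuinely delicate step is the upper bound, which you dispatch in one line: from $\chi|_{G_{k_d}}=1$ you conclude that ``$\res_{k_d}$ annihilates the class.'' The class whose vanishing governs trivialization of the twist lives in $H^1(G_{k_d},\mu_m(\kbar))$ (equivalently, $\Aut$-coefficients), while $\chi$ lives in $\mu_M$-coefficients, and the identification $H^1(G_k,\mu_m(\kbar))\cong \Hom(G_k,\mu_M(\kbar))$ being invoked is the Kummer-theoretic one over $k$ (sending $c_{\alpha,m}$ to $c_{\alpha,M}$), which does \emph{not} commute with restriction; indeed over $k_d$ the groups $H^1(G_{k_d},\mu_m)$ and $H^1(G_{k_d},\mu_M)$ need not even have the same order, since $|\mu_m(k_d)|$ can exceed $M$. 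The implication you want is false for general cyclic coefficients: for $\mu_8(\kbar)$ over $\bF_{11}$ (so $M=2$) one checks that restriction $H^1(G_k,\mu_8)\to H^1(G_{\bF_{121}},\mu_8)$ is injective, so the nonzero class---which corresponds under the Kummer identification to the quadratic character, of order $2$---has degree $4$, not $2$. Thus ``degree equals order'' is special to $m\in\{2,4,6\}$ and rests on arithmetic your argument never invokes (e.g.\ $4\mid q+1$ whenever $q\equiv 3\pmod 4$). To be fair, the paper's printed proof makes exactly the same silent jump (``Thus the twist $(E',O_{E'})$ is in the kernel of the restriction map to $G_{k(\beta)}$''), so your proposal is no less complete than the paper's own argument; the gap-free derivation of the proposition is the one through Theorem \ref{newmain}, as recorded in Remark \ref{ell prop ded}.
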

\begin{proof}
By Hilbert's 90 and the exact sequence of abelian groups
\begin{equation*}
\xymatrix{1 \ar[r]  & \mu_m(k) \ar[r] & k^\times \ar[r]^-{(\cdot)^m} & k^\times  \ar[r] & k^\times/(k^\times)^m \ar[r] & 1,}
\end{equation*}
we know that
$$|H^1(G_k, \mu_m(\overline{k}))| = |k^\times/(k^\times)^m| = |\mu_m(k)|.$$ 
Thus it follows from Proposition \ref{ell rem} that we have
$$ N= | \Twist(\overline{k}/k, (E,O_E) )|=|H^1(G_k, \Aut_{\overline{k}}(E_{\overline{k}}))|=|H^1(G_k, \mu_m(\overline{k}))|=M.$$

By a similar argument as in the above, note that we have the following group isomorphisms
\begin{equation}\label{ell isoms}
\Twist(\overline{k}/k, (E,O_E) ) \cong H^1(G_k, \Aut_{\overline{k}}(E_{\overline{k}})) \cong H^1 (G_k, \mu_m(\overline{k})) \cong H^1(G_k, \mu_M(\overline{k}))=\Hom(G_k, \mu_M(\overline{k})).
\end{equation}

Let $(E^{\prime}, O_{E^{\prime}})$ be a $(L/k)$-twist of $(E,O_E)$ of degree $[L:k]$, as in Definition \ref{degdef},
that corresponds to a  character $\chi \in \Hom(G_k, \mu_M(\overline{k}))$ of order $d$, for some $d~|~M$, via the identifications of (\ref{ell isoms}). Since $H^1 (G_k, \mu_M (\overline{k})) \cong k^{\times}/(k^{\times})^M,$ we may write $\chi = \chi_\alpha$ for some $\overline{\alpha} \in k^\times/(k^\times)^M$, where $\chi_{\alpha}$ is defined by $\chi_\alpha(\sigma) = \sigma(\beta)/\beta$ for $\sigma \in G_k$ and $\beta$ satisfying $\beta^d = \alpha$.
For a $\tau \in G_{k(\beta)}$, we have $\chi_\alpha(\tau) = 1$.
Thus the twist $(E',O_{E'})$ is in the kernel of the restriction map to  $G_{k(\beta)}$,  which means that
it is isomorphic to $(E, O_E)$ over $k(\beta)$, and hence, $[L:k] \leq [k(\beta):k]$. 
Also, since $(E^{\prime},O_{E^{\prime}})$ becomes isomorphic to $(E,O_E)$ over $L$, we have $\sigma(\beta)=\beta$ for all $\sigma \in \textrm{Gal}(\overline{k}/L)$ so that $\beta \in L$, whence, $k(\beta) \subseteq L.$ 
Consequently, it follows that $L=k(\beta)$.
Now, since $\mu_d (\overline{k}) \subset k^{\times}$ and the order of $\chi$ is exactly $d,$ we can see that $[k(\beta):k]=d$, and thus, we obtain $\textrm{deg}(E^{\prime},O_{E^{\prime}})=d.$
Since the number of elements of order $d$ in $\Hom(G_k, \mu_M(\kbar))$ is equal to $\varphi(d)$, it follows that $\varphi(d) \leq N(d)$, and this inequality holds for any divisor $d$ of $M$. Therefore we get
\begin{equation*}
M=\sum_{d^{\prime}|M} \varphi(d^{\prime}) \leq \sum_{d^{\prime}|M} N(d^{\prime}) \leq N =M,
\end{equation*}
and hence, it follows that $\varphi(d^{\prime})=N(d^{\prime})$ for all $d^{\prime}~|~M$, and $N(d^{\prime})=0$ otherwise. \\
\indent This completes the proof.
\end{proof}

\begin{remark}
If $p=2$ or $3,$ then the conclusion of Proposition \ref{ECtwists} might not hold, in general. For such examples, see \cite[Propositions 2.1 and 3.1]{KAT}.
\end{remark}

Now, we  give a higher dimensional analogue of Proposition \ref{ECtwists} for simple polarized abelian varieties whose automorphism group is cyclic.

\begin{theorem} \label{newmain}
Let $(X, \lambda)$ be a simple polarized abelian variety  over $k$ such that $\Aut_{\kbar}(X_{\kbar}, \lambda_{\kbar})$ is a cyclic group of order $n$, and let $s$ be an integer such that an action of $\Frob_k$ on $\Aut_{\kbar}(X_{\kbar}, \lambda_{\kbar})$ is induced by a multiplication by $s$. 
We also define $N(d)$ to be the number of $(\kbar/k)$-twists of $(X, \lambda)$ of degree $d$, and put $N:=\sum_{d\geq 1}N(d)$.
Then:\\
(a) We have $N \mid n$, and $N=n$ if and only if $G_k$ acts trivially on $\Aut_{\kbar}(X_{\kbar}, \lambda_{\kbar})$. \\
(b) For $d \nmid n,$ we have $N(d)=0.$ \\
(c) For $d \mid n$, we have
\begin{equation*}
N(d) =   \prod_{\substack{p \mid d }} \lbrb{
p^{\min(t_{1,p}, t_{3,p}) + \min(t_{2,p}, t_{3,p}) - \min(t_{1,p} + t_{2,p}, t_{3,p})} - p^{\min(t_{1,p}-1, t_{3,p}) + \min(t_{2,p}, t_{3,p}) - \min(t_{1,p} + t_{2,p} - 1, t_{3,p})}},
\end{equation*}
where $t_{1,p}=\ord_p(s-1), t_{2,p}=\ord_{p}(s^{d-1}+\cdots+1),$ and $t_{3,p}=\ord_{p} n.$
\end{theorem}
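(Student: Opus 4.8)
The plan is to transport the counting problem into Galois cohomology via the bijection $\theta$ of Theorem \ref{Intromain1} and then to exploit the special structure of the situation: $G_k=\Gal(\kbar/k)$ is procyclic, topologically generated by $\Frob_k$, while $A:=\Aut_{\kbar}(X_{\kbar},\lambda_{\kbar})$ is cyclic of order $n$ with $\Frob_k$ acting as multiplication by $s$. For a procyclic group acting on a finite module one has $H^1(G_k,A)\cong A/(s-1)A\cong \bZ/\gcd(n,s-1)\bZ$, the coinvariants, so that $N=|H^1(G_k,A)|=\gcd(n,s-1)$. This already yields part (a): $N\mid n$, with equality exactly when $(s-1)A=0$, i.e.\ when $G_k$ acts trivially on $A$.

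For the degree I would use that a twist splits over a finite extension $L/k$ if and only if its class lies in $\ker\bigl(\res\colon H^1(G_k,A)\to H^1(G_L,A)\bigr)$; since over $k$ there is a unique extension $\bF_{q^r}$ of each degree $r$, Definition \ref{degdef} says that the degree of a class $c$ is the least $r$ with $c\in\ker\res_r$. Writing $\sigma=\Frob_k$ and using the cocycle relation $c(\sigma^r)=(1+\sigma+\cdots+\sigma^{r-1})c(\sigma)$, the restriction $\res_r$ is identified with multiplication by $\nu_r=1+s+\cdots+s^{r-1}$ on coinvariants. Working prime by prime, $\ker\res_r$ depends on $r$ only through $\ord_p(\nu_r)$, and these valuations are weakly monotone in $\ord_p(r)$; hence $\{r:c\in\ker\res_r\}$ is exactly the set of multiples of $\deg(c)$, and $\deg(c)\mid n$. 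In particular $N(d)=0$ for $d\nmid n$, which is part (b), and, crucially, $\widetilde N(d):=\sum_{d'\mid d}N(d')=|\ker\res_d|$.

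The next step is to compute $\widetilde N(d)=|\ker\res_d|$ in closed form, and here I would invoke the inflation--restriction sequence advertised in the introduction: $\ker\res_d\cong H^1\bigl(\Gal(\bF_{q^d}/k),A[s^d-1]\bigr)$, and since $\Gal(\bF_{q^d}/k)$ is cyclic of order $d$ this is $\ker(\nu_d)/\im(\sigma-1)$ computed on the module $M=A[s^d-1]$. Decomposing $A=\bigoplus_p A_p$ by the Chinese Remainder Theorem and setting $t_{1,p}=\ord_p(s-1)$, $t_{2,p}=\ord_p(\nu_d)$, $t_{3,p}=\ord_p n$, the lifting-the-exponent lemma (for odd $p$; the prime $2$, if it occurs, is handled analogously) gives $\ord_p(s^d-1)=t_{1,p}+t_{2,p}$, whence $|M_p|=p^{\min(t_{1,p}+t_{2,p},\,t_{3,p})}$, $|\ker(\nu_d\mid M_p)|=p^{\min(t_{2,p},t_{3,p})}$, and $|\im(\sigma-1\mid M_p)|=p^{\min(t_{1,p}+t_{2,p},t_{3,p})-\min(t_{1,p},t_{3,p})}$. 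Multiplying these out gives
\begin{equation*}
\widetilde N(d)=\prod_p p^{\,\min(t_{1,p},t_{3,p})+\min(t_{2,p},t_{3,p})-\min(t_{1,p}+t_{2,p},t_{3,p})},
\end{equation*}
which is exactly the first exponent in the statement.

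Finally I would recover $N(d)$ from $\widetilde N(d)$ by the M\"obius inversion formula, $N(d)=\sum_{e\mid d}\mu(d/e)\widetilde N(e)$. Because $\widetilde N$ is multiplicative in the prime-exponents of $d$ (each local factor depends on $d$ only through $t_{2,p}=\ord_p(\nu_d)=\ord_p d$, again by lifting-the-exponent, and equals $1$ whenever $t_{1,p}=0$), the inversion factors as a product over $p\mid d$ of the difference of two consecutive local factors, and a short manipulation of the nested $\min$-terms rewrites the subtracted exponent as $\min(t_{1,p}-1,t_{3,p})+\min(t_{2,p},t_{3,p})-\min(t_{1,p}+t_{2,p}-1,t_{3,p})$, yielding part (c). The main obstacle I anticipate is not any single computation but the bookkeeping that links them: establishing cleanly that the degree equals the least splitting level and that the splitting levels are precisely the multiples of the degree (so that $\widetilde N(d)=|\ker\res_d|$), and then pushing the prime-by-prime $p$-adic valuation analysis—especially the lifting-the-exponent input and the resulting nested minima—through the M\"obius step without sign or range errors.
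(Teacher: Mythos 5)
Your proposal is correct and matches the paper's proof in all essentials: reduction of part (a) to $H^1(G_k,A)\cong A/(s-1)A$ for $A=\Aut_{\kbar}(X_{\kbar},\lambda_{\kbar})$, identification of the twists of degree dividing $d$ with $\ker(\res_d)$ where restriction acts as multiplication by $\nu_d=1+s+\cdots+s^{d-1}$ on coinvariants, a prime-by-prime valuation count yielding the exponent $\min(t_{1,p},t_{3,p})+\min(t_{2,p},t_{3,p})-\min(t_{1,p}+t_{2,p},t_{3,p})$, and finally M\"obius inversion using that the local factor depends on $d$ only through $\ord_p(d)$. The single point of divergence is how $|\ker(\res_d)|$ is evaluated --- you use inflation--restriction plus the cohomology of the finite cyclic group $\Gal(k'/k)$ acting on $A[s^d-1]$, whereas the paper applies the six-term kernel--cokernel exact sequence attached to the factorization $s^d-1=\nu_d\cdot(s-1)$, obtaining $\gcd(s-1,n)\gcd(\nu_d,n)/\gcd(s^d-1,n)$; these are the same computation in different clothing, and your explicit observation that the local factor equals $1$ whenever $t_{1,p}=0$ is precisely the point needed to make the multiplicativity (and your monotonicity claim in part (b)) airtight, since $\ord_p(\nu_d)$ is a function of $\ord_p(d)$ only for primes $p\mid s-1$.
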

\begin{proof}
For simplicity, we write $A$ for the finite $G_k$-module $\Aut_{\kbar}(X_{\kbar}, \lambda_{\kbar})$.
For any cyclic $G_k$-module $M$, we have $H^1(G_k, M) \cong M/(\Frob_k-1)M$.
Then it follows from Theorem \ref{Intromain1} that
\begin{equation*}
 N= | \Twist(\overline{k}/k, (X, \lambda))| = |H^1(G_k, A)|=
\left| \frac{ A}{(s-1) A} \right|.
\end{equation*}
Hence we have $N \mid n$, and $N = n$ if and only if $(s-1)A = 0$. Since $n$ is even and $(s,n) = 1$, the condition $(s-1)A=0$ is equivalent to the fact that $s =1.$
This proves (a).

Now, for a field extension $k'/k$ of degree $d$, we have a commutative diagram
\begin{equation*}
\xymatrix{
H^1(G_k, A) \ar[r]^-{\cong} \ar[d]^-{\mathrm{res}} & A/(\Frob_k-1)A \ar[d]^-{\tau_d} \\
H^1(G_{k'}, A) \ar[r]^-{\cong} & A/(\Frob_k^d - 1)A
}
\end{equation*}
where $\tau_d$ is an induced action of $1 + \Frob_k + \cdots + \Frob_k^{d-1}$  on $A$.
Since $k'/k$ is a Galois extension, the kernel of the restriction map is identified with the set of twists of $(X,\lambda)$ that become trivial after base change to $k'$,
by Theorem \ref{Intromain1}.
Hence we can identify the set of twists of $(X,\lambda)$ of degree dividing $d$ with the kernel of the map $\tau_d$.
If $d \nmid n$, then the kernel of the restriction to $k'$ is equal to the kernel of restriction to the 
field of degree $(d,n)$, and hence, there is no twist of $(X,\lambda)$ of degree exactly $d$ in this case. This proves (b).
 
For the rest of the proof, we may assume that $d \mid n$.  The sequence 
\begin{equation*}
0 \to A[\Frob_k-1] \to A[\Frob_k^d - 1] \to A[\tau_d] \to \frac{A}{(\Frob_k-1)A} \to \frac{A}{(\Frob_k^d-1)A} \to \frac{A}{\tau_dA} \to 0
\end{equation*}
is exact. Hence, we have
\begin{equation*}
|\ker(\tau_d)| = \frac{|A[\Frob_k-1]||A[\tau_d]|}{|A[\Frob_k^d-1]|} = \frac{\gcd(s-1,n) \gcd(s^{d-1} + \cdots + 1, n)}{\gcd(s^d - 1, n)}.
\end{equation*}

For simplicity, we put $c(d, s, n):=\frac{\gcd(s-1,n) \gcd(s^{d-1} + \cdots + 1, n)}{\gcd(s^d - 1, n)}$, and  we let $m_p$ denote the quantity $p^{\ord_p(m)}$ for an integer $m$.
Finally, for a prime $p,$ let $t_{1,p}, t_{2,p}, t_{3,p}$ be three integers that satisfy $(s-1)_p = p^{t_1}, (s^{d-1} + \cdots +1)_p = p^{t_2}$, and $n_p = p^{t_3}$. 
Then we have
\begin{equation*}
c(d, s, n)_p = p^{\min(t_{1,p}, t_{3,p}) + \min(t_{2,p}, t_{3,p}) - \min(t_{1,p} + t_{2,p}, t_{3,p})}.
\end{equation*}
Now, suppose that $s^d = 1 + ap^t$ for some integers $a$ and $t$ such that $(a,p) =1$. Then for a positive integer $m$ relatively prime to $p$, we have
\begin{equation*}
s^{dm} = (1 + ap^t)^m = 1 + (ma + N )p^t
\end{equation*}
with $(ma, p) = 1$ and $p^t \mid N$. Hence it follows that $\ord_p(s^{dm}-1) = \ord_p(s^d-1)$, which, in turn, implies that $t_{2,p}$ depends only on $d_p$.
Especially, we have $c(d,s,n)_p = c(d_p, s, n) = c(d_p, s, n_p)$ and $c( \bullet , s, n)$ is multiplicative.
By the M\"obius inversion, we know that the number of the twists of $(X,\lambda)$ of degree exactly $d$ equals
\begin{equation*}
\sum_{d' \mid d} \mu(d')c(\frac{d}{d'}, s, n).
\end{equation*}
Since $c(1,s,n)$ = 1, it follows that
\begin{align*}
\sum_{d' \mid d} & \mu(d')c(\frac{d}{d'}, s, n) 
= \prod_{p \mid d} 
\lbrb{ \sum_{d' \mid d_p} \mu(d') c(\frac{d_p}{d'}, s, n_p)}
= \prod_{p \mid d} \lbrb{c(d_p, s, n_p) - c(\frac{d_p}{p}, s, n_p) }\\
&= \prod_{\substack{p \mid d }} \lbrb{
p^{\min(t_{1,p}, t_{3,p}) + \min(t_{2,p}, t_{3,p}) - \min(t_{1,p} + t_{2,p}, t_{3,p})} - p^{\min(t_{1,p}-1, t_{3,p}) + \min(t_{2,p}, t_{3,p}) - \min(t_{1,p} + t_{2,p} - 1, t_{3,p})}},
\end{align*}
and this proves (c). \\
\indent This completes the proof.
\end{proof}
We note that if $G_k$ acts trivially on $\Aut_{\kbar}(X_{\kbar}, \lambda_{\kbar})$ so that $s=1$, then it follows from Theorem \ref{newmain} that the number of $(\overline{k}/k)$-twists of $(X,\lambda)$ of degree exactly $d$ is equal to $\varphi(d)$ for $d \mid n$.

\begin{remark}\label{ell prop ded}
Suppose that $\textrm{char}~k =p \geq 5$. In this case, we have seen that the automorphism group of any elliptic curve $(E, O_E)$ is cyclic, and hence, we can also deduce Proposition \ref{ECtwists} from Theorem \ref{newmain}.
\end{remark}

Now, we recall that a simple abelian variety over $k$ is \emph{absolutely simple (or geometrically simple)} if it is simple over $\overline{k}.$ We first record one useful lemma.
\begin{lemma} \label{abs simple lem-1}
Let $(X,\lambda)$ be an absolutely simple polarized abelian variety of odd prime dimension $g$ over a finite field $k$ with $\textrm{char}~k =p >0.$ If $g=3$ or if $g \geq 5$ and $p \geq 5,$ then $\mathrm{Aut}_{\overline{k}}(X_{\overline{k}}, \lambda_{\overline{k}})$ is cyclic. 
\end{lemma}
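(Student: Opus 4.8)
The plan is to realize $\Aut_{\kbar}(X_{\kbar},\lambda_{\kbar})$ as a finite subgroup of the unit group of a \emph{number field}, and then to invoke the classical fact that every finite subgroup of the multiplicative group of a field is cyclic. Write $D = \End^0_{\kbar}(X_{\kbar}) = \End_{\kbar}(X_{\kbar}) \otimes_{\bZ} \bQ$. Since $X$ is absolutely simple, $X_{\kbar}$ is simple, so $D$ is a division algebra, equipped with the positive Rosati involution $\dagger$ attached to $\lambda_{\kbar}$. The group $\Aut_{\kbar}(X_{\kbar}, \lambda_{\kbar})$ consists exactly of the units $u$ of the order $\End_{\kbar}(X_{\kbar})$ satisfying $u^{\dagger} u = 1$; in particular it is a finite subgroup of $D^{\times}$.

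The crucial structural step is to prove that $D$ is commutative, i.e.\ that $D$ is a number field. For this I would use the standard bound $[D : \bQ] \mid 2g$ for a simple abelian variety of dimension $g$, obtained from the faithful action of $D$ on the rational Tate module $V_{\ell}(X_{\kbar})$ with $\ell \neq p$ (which has $\bQ_{\ell}$-dimension $2g$). Letting $F$ denote the center of $D$ and $\delta$ its Schur index, so that $[D:\bQ] = [F:\bQ]\cdot \delta^{2}$, the divisibility $[F:\bQ]\cdot \delta^{2} \mid 2g$ combined with the squarefreeness of $2g = 2\cdot g$ (here $g$ is an odd prime, so $2g$ has no square factor exceeding $1$) forces $\delta^{2} = 1$. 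Hence $D = F$ is a field.

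The hypotheses $g=3$, or $g \geq 5$ together with $p \geq 5$, enter precisely here: I would cite the classification of endomorphism algebras of absolutely simple abelian varieties of odd prime dimension over finite fields from \cite{Hwa2} both to identify $F$ as a totally real field or a CM field carrying the Rosati involution, and to rule out the degenerate small-characteristic possibilities. Once $D = F$ is a field, $\Aut_{\kbar}(X_{\kbar}, \lambda_{\kbar})$ is a finite subgroup of $F^{\times}$, hence consists of roots of unity of $F$ and is therefore cyclic, which completes the argument.

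I expect the main obstacle to be the commutativity step rather than the final cyclicity deduction, which is purely formal. Concretely, the delicate points are justifying the dimension bound $[D:\bQ]\mid 2g$ in characteristic $p$ and correctly invoking the structural results of \cite{Hwa2}, whose proofs rely on the restrictions on $g$ and $p$. A minor verification to include is that the realization of $\Aut_{\kbar}(X_{\kbar},\lambda_{\kbar})$ inside $D^{\times}$ is genuinely compatible with the Rosati involution, so that the relevant elements land among the torsion units (roots of unity) of $F$, confirming both finiteness and cyclicity.
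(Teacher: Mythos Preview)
Your argument has a genuine gap at the ``crucial structural step'': the bound $[D:\bQ]\mid 2g$ does \emph{not} hold in positive characteristic. That divisibility is valid in characteristic zero, where $H_1(X,\bQ)$ is a $2g$-dimensional $\bQ$-vector space and a module over the division ring $D$, hence free. Over $\kbar=\overline{\bF_p}$ one only has the $\ell$-adic Tate module $V_\ell$, which is a module over $D\otimes_{\bQ}\bQ_\ell$; but this tensor product is in general a product of matrix algebras, not a division ring, so freeness gives only the weaker constraint $e\cdot d \mid 2g$ (with $e=[F:\bQ]$ and $d^2=[D:F]$). Since $2g$ is squarefree this does \emph{not} force $d=1$.

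In fact the noncommutative case genuinely occurs: for absolutely simple $X_{\kbar}$ of odd prime dimension $g$, Oort's classification allows $D$ to be a central division algebra of degree $g$ over an imaginary quadratic field (so $[D:\bQ]=2g^{2}$, which certainly does not divide $2g$), and also allows a quaternion case. The paper handles these noncommutative possibilities separately, citing \cite[Theorem~3.5]{Hwa2} for the degree-$g$ division algebra case; there the cyclicity of the automorphism group is a nontrivial result about finite subgroups of unitary groups in such algebras, not a consequence of ``finite subgroup of a field''. Your strategy of reducing everything to the multiplicative group of a number field therefore cannot succeed: you must confront at least one genuinely noncommutative $D$, and that is where the hypotheses on $g$ and $p$ and the input from \cite{Hwa2} are actually used.
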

\begin{proof}
Let $D=\textrm{End}_{\overline{k}}^0(X_{\overline{k}}).$  Then, in view of \cite[$\S7$]{Oort1}, we have the following three cases to consider: \\
\indent (i) If $D$ is a field, then it is clear that $\textrm{Aut}_{\overline{k}}(X_{\overline{k}}, \lambda_{\overline{k}})$ is cyclic. \\
\indent (ii) If $g \geq 5$ and $D=D_{p,\infty}$ for some prime $p \geq 5$, then it also follows that $\textrm{Aut}_{\overline{k}} (X_{\overline{k}}, \lambda_{\overline{k}})$ is cyclic. Finally, \\
\indent (iii) If $D$ is a central simple division algebra of degree $g$ over an imaginary quadratic field, then it follows from \cite[Theorem 3.5]{Hwa2} that $\textrm{Aut}_{\overline{k}}(X_{\overline{k}}, \lambda_{\overline{k}})$ is cyclic. \\
\indent This completes the proof.
\end{proof}

\begin{remark}
In the case of $p=2$ or $p=3,$ if we add an additional condition that $|\mathrm{Aut}_{\overline{k}}(X_{\overline{k}}, \lambda_{\overline{k}})| \leq 6,$ then we can obtain a similar result as in Lemma \ref{abs simple lem-1}.
\end{remark}

Now, we are ready to prove Theorem \ref{main theorem}.

\begin{proof}[Proof of Theorem \ref{main theorem}]
This follows from Lemma \ref{abs simple lem-1} and Theorem \ref{newmain}.
\end{proof}

We conclude this section by giving a concrete example, which is based on \cite[Theorem 4.1]{Hwa2}.
\begin{example}
Let $k=\mathbb{F}_{125}$. Also, let $X$ be an abelian threefold over $k$ such that $\pi_X$ is conjugate (as $125$-Weil numbers) to a zero of the quadratic polynomial $t^2 +5t+125 \in \mathbb{Z}[t]$, and that $\End_k(X)$ is a maximal $\mathbb{Z}$-order, containing $\mathbb{Z} \left[ \frac{1+\sqrt{-19}}{2}\right]$, in a central simple division algebra $\End_k^0 (X)$ of degree $3$ over $\mathbb{Q}(\sqrt{-19})$, and let $(X^{\prime}, \lambda )$ be a polarized abelian threefold over $k$ with the property that $X^{\prime}$ is $k$-isogenous to $X$ and $\Aut_k(X^{\prime}, \lambda ) \cong \mathbb{Z}/2\mathbb{Z}$ (as in the proof of \cite[Theorem 4.1]{Hwa2}-(1)).  By \cite[Proposition 3]{How2}, $X^{\prime}$ is absolutely simple. Now, it is clear that $\End_k(X^{\prime}) \subseteq \End_{\overline{k}}(X^{\prime}_{\overline{k}}).$ Then since $\End_{\overline{k}}(X^{\prime}_{\overline{k}})$ is a $\mathbb{Z}$-order in $\End_{\overline{k}}^0(X^{\prime}_{\overline{k}})= \End_k^0(X^{\prime})$, and $\End_k(X^{\prime})$ is a maximal $\mathbb{Z}$-order in $\End_k^0(X^{\prime}),$ it follows that $\End_k(X^{\prime})=\End_{\overline{k}}(X^{\prime}_{\overline{k}}).$ In particular, we have $\Aut_k (X^{\prime})=\Aut_{\overline{k}}(X^{\prime}_{\overline{k}}).$ Therefore we can conclude that 
$$\Aut_{\overline{k}}(X^{\prime}_{\overline{k}}, \lambda_{\overline{k}})=\Aut_k (X^{\prime},\lambda) \cong \mathbb{Z} / 2\mathbb{Z}.$$ Then by Theorem \ref{main theorem}, we have $|\Twist(\overline{k}/k, (X^{\prime},\lambda))|=2$ and the only non-trivial $(\overline{k}/k)$-twist of $(X^{\prime},\lambda)$ is of degree $2.$
\end{example}

\textbf{Acknowledgment.} The first author was supported by a KIAS Individual Grant (MG069901) at Korea Institute for Advanced Study. The second author was supported by the National Research Foundation of Korea (NRF) funded by the Ministry of Education, under the Basic Science Research Program(2019R1C1C1004264). He thanks KIAS for its hospitality and financial support, and Junyeong Park for useful discussion. Also, the authors give thanks to Jungin Lee for letting them know about a reference.


\begin{thebibliography}{alpha}


\bibitem[EGM]{Ben} B. Edixhoven, G. van der Geer, B. Moonen, \emph{Abelian Varieties}, a draft,  available at \url{http://gerard.vdgeer.net/AV.pdf}.




\bibitem[Hwa19]{Hwa2} W. Hwang, \emph{Automorphism groups of simple polarized abelian varieties of odd prime dimension over finite fields}, J. Number Theory \textbf{204}, (2019),  pp. 561--578.


\bibitem[HZ02]{How2} E.W. Howe, H.J. Zhu, \emph{On the existence of absolutely simple abelian varieties of a given dimension over an arbitrary field}, J. Number Theory \textbf{92}, (2002), pp.139--163.

\bibitem[KST17]{KAT} M. Kronberg, M. A. Soomro, J. Top, \emph{Twists of elliptic curves}, SIGMA Symmetry Integrability Geom. Methods Appl. \textbf{13} (2017), Paper No. 083, 13 pp.

\bibitem[Mil]{Mil} J. S. Milne, \emph{Abelian varieties}, available at \url{https://www.jmilne.org/math/CourseNotes/av.html}.

\bibitem[Oor]{Oort1} F. Oort, \emph{Endomorphism algebras of Abelian varieties}, Algebraic Geometry and Commutative Algebra in Honor of Masayoshi Nagata (1987), pp. 469--502.

\bibitem[Ser]{Ser2} J. P. Serre, \emph{Cohomologie Galoisienne}, Fifth edition. Lecture Notes in Mathematics, \textbf{5}. Springer-Verlag, Berlin, (1994).

\bibitem[Sil]{Sil1} J. Silverman, \emph{Arithmetic of Elliptic Curves}, Second edition. Graduate Texts in Mathematics, \textbf{106}. Springer, Dordrecht, (2009).


\end{thebibliography}
\end{document}